\declaretheoremstyle[headfont=\normalsize\normalfont\bfseries,notefont=\mdseries, notebraces={(}{)},bodyfont=\normalfont,postheadspace=0.5em]{basicstyle}
\declaretheorem[name=Definition,style=basicstyle,qed=$\triangle$]{definition}
\declaretheorem[name=Remark,style=basicstyle,sibling=definition]{remark}
\declaretheorem[style=basicstyle,name=Theorem]{theorem}
\declaretheorem[style=basicstyle,name=Lemma,sibling=theorem]{lemma}
\renewenvironment{proof}{\preproof}{\endpreproof}
\newcommand{\abs}[1]{\left|#1\right|}
\newcommand{\bd}{\partial}
\newcommand{\cl}[1]{\overline{#1}}
\newcommand{\C}{\mathbb{C}}
\renewcommand{\d}{\mathrm{d}}
\newcommand{\ip}[1]{\left\langle#1\right\rangle}
\newcommand{\norm}[1]{\left\lVert#1\right\rVert}
\newcommand{\N}{\mathbb{N}}
\newcommand{\R}{\mathbb{R}}
\renewcommand\section{\@startsection{section}{1}{0pt}{-3.5ex \@plus -1ex \@minus -.2ex}{2.3ex \@plus.2ex}{\centering\bfseries}}
\newcommand{\set}[1]{\left\{#1\right\}}
\newcommand{\Z}{\mathbb{Z}}
\begin{document}

\title[Holomorphic Cylinders, Exponential Estimates, and Flow Lines]{Perturbed Holomorphic Cylinders, Exponential Estimates, and Flow Lines}


\author{Dylan Cant}
\address{Stanford University}
\email{dcant@stanford.edu}
\thanks{The author would like to thank Daren Chen for many useful discussions surrounding this result.}


\subjclass[2020]{Primary 32Q65, 53D05 Secondary 53D40}

\date{Friday, October 8, 2021}

\dedicatory{Dedicated to Ollie}


\begin{abstract}
    A convergence result for perturbed holomorphic cylinders in $\R^{2n}$ is proved; a sequence of perturbed holomorphic cylinders converges to a configuration of two holomorphic disks joined by a flow line. A key step in the computation is played by an a priori exponential estimate. 
\end{abstract}

\maketitle

\section{Introduction}

\begin{definition}[geometric set-up]\label{definition:set-up}
  Let $V_{n}$ be a $C^{\infty}$ convergent sequence of vector fields on $\cl{B}(1)^{2n}\subset \R^{2n}$, $\epsilon_{n}\to 0$, and $u_{n}:[-r_{n}-1,r_{n}+1]\times \R/\Z\to \cl{B}(1)$ satisfy the following partial differential equation:
  \begin{equation}\label{eq:1}
    \bd_{s}u_{n}+J_{0}\bd_{t}u_{n}=\epsilon_{n}V_{n},
  \end{equation}
  where $J_{0}$ is the standard complex structure on $\R^{2n}\simeq \C\times \C\times \cdots \times \C$.

\begin{figure}[H]
  \centering
  \begin{tikzpicture}
    \begin{scope}[shift={(-2,0)}]
      \draw (0,0) circle (0.5 and 0.2) (0.5,2) circle (0.5 and 0.2);
      \draw (-0.5,0) to[out=90,in=-90] (0,2);
      \draw (0.5,0) to[out=90,in=-90] (1,2);
    \end{scope}
    \begin{scope}
      \draw (0,0) circle (0.5 and 0.2) (0.5,2) circle (0.5 and 0.2);
      \draw (-0.5,0) to[out=90,in=-90] (0.25,1) to[out=90,in=-90] (0,2);
      \draw (0.5,0) to[out=90,in=-90] (0.35,1) to[out=90,in=-90] (1,2);
    \end{scope}
    \begin{scope}[shift={(2,0)}]
      \draw (0,0) circle (0.5 and 0.2) (0.5,2) circle (0.5 and 0.2);
      \draw (-0.5,0) to[out=90,in=90]node(A)[draw,circle,inner sep=1pt,fill=black]{} (0.5,0) (0,2) to[out=-90,in=-90]node(B)[draw,circle,inner sep=1pt,fill=black]{} (1,2);
      \draw[postaction={decorate,decoration={markings,mark=at position 0.5 with {\arrow[scale=1.5]{>};}}}] (A)to[out=90,in=-90]node[right]{flow line for $V$}(B);
      
    \end{scope}
  \end{tikzpicture}
  \caption{Degeneration of perturbed holomorphic cylinders. Here $V=\lim_{n}V_{n}$.}
  \label{fig:1}
\end{figure}
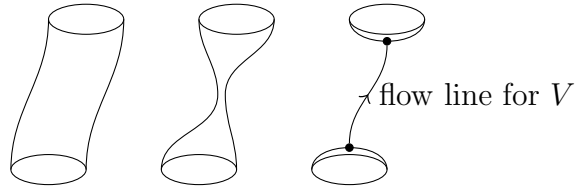
  We think of solutions to \eqref{eq:1} as being perturbed holomorphic cylinders. If $V_{n}$ is a gradient vector field, then this equation is \emph{Floer's equation} (restricted to a finite length cylinder) (see \cite{floer-ham} and \cite{salamon1997}).
\end{definition}

The main goal in this short paper is to explain how a sequence of maps $u_{n}$ satisfying Definition \ref{definition:set-up} has a subsequence that converges (in a certain sense) to a flow line for $V=\lim_{n\to\infty}V_{n}$ joining two holomorphic disks (see Figure \ref{fig:1}). More precisely, we will show that if $\epsilon_{n}r_{n}$ converges to a finite number $\ell>0$, or $r_{n}\to \infty$ and $\epsilon_{n}r_{n}\to \ell=0$, then (a subsequence of) $u_{n}$ will converge to two holomorphic disks joined by a flow line for $V$ of length $2\ell$.

Here is the theorem we will prove.
\begin{theorem}\label{theorem:1}
  Let $u_{n}$ be a sequence satisfying Definition \ref{definition:set-up}. Suppose $\epsilon_{n}r_{n}\to \ell$, and if $\ell=0$ then  $r_{n}\to\infty$. Then there exist numbers $\rho_{n}\to \infty$ so that $\epsilon_{n}\rho_{n}\to 0$, $\rho_{n}<r_{n}$, and so that if we decompose the domain:
  \begin{equation*}
    [-r_{n},r_{n}]\times \R/\Z=([-r_{n},-r_{n}+\rho_{n}]\cup [-(r_{n}-\rho_{n}),r_{n}-\rho_{n}]\cup  [r_{n}-\rho_{n},r_{n}])\times \R/\Z,
  \end{equation*}
  and pass to a subsequence, we have:
  \begin{enumerate}
  \item the maps $u_{n}^{-}(s,t)=u_{n}(s-r_{n},t)$ and $u_{n}^{+}(s,t)=u_{n}(s+r_{n},t)$, when restricted to $[0,\rho_{n}]\times \R/\Z$ and $[-\rho_{n},0]\times \R/\Z$, converge uniformly on to holomorphic cylinders defined on $[0,\infty)\times \R/\Z$ and $(-\infty,0]\times \R/\Z$ with removable singularities $x_{-}$ and $x_{+}$.
  \item the rescaled maps
    \begin{equation*}
      v_{n}(s,t)=u_{n}(\epsilon_{n}^{-1}s,\epsilon_{n}^{-1}t)
    \end{equation*}
    defined on the rescaled central cylinder $[-\epsilon_{n}(r_{n}-\rho_{n}),\epsilon_{n}(r_{n}-\rho_{n})]\times \R/\epsilon_{n}\Z$ converges uniformly to a flow line $v_{\infty}$ of $V=\lim_{n}V_{n}$ of length $2\ell$ joining $x_{-}$ and $x_{+}$.
  \end{enumerate}
  See Figure \ref{fig:limiting-configuration}.
\end{theorem}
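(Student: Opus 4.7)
The plan is to combine an a priori exponential estimate on $\abs{\bd_t u_n}$ with standard elliptic compactness for the approximately holomorphic maps at the ends and a rescaling argument on the middle cylinder. Throughout, boundedness of the target $\cl{B}(1)$ and $C^\infty$-boundedness of $V_n$ keep all constants uniform in $n$. The main obstacle is the first step: obtaining the exponential estimate with constants and decay rate that do not degrade with $n$.

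For the estimate, I would set $e_n(s) = \tfrac{1}{2}\int_0^{1} \abs{\bd_t u_n(s,t)}^2\, dt$. Differentiating twice in $s$ and using \eqref{eq:1} together with the Poincar\'{e} inequality on $\R/\Z$ (whose sharp constant $4\pi^2$ bounds $-\bd_t^2$ from below on mean-zero functions), I expect a differential inequality of the form
\begin{equation*}
  e_n''(s) \geq \alpha\, e_n(s) - C\epsilon_n^2
\end{equation*}
for some $\alpha > 0$ close to $4\pi^2$ and $C$ depending only on the $C^1$-norm of $V_n$ and the image bound. A maximum-principle comparison with solutions of $y'' = \alpha y$ on $[-r_n, r_n]$, combined with the boundedness of $e_n$, then yields
\begin{equation*}
  e_n(s) \leq K\bigl(e^{-\sqrt{\alpha}\,(r_n - \abs{s})} + \epsilon_n^2\bigr),
\end{equation*}
and elliptic bootstrapping upgrades this to the same pointwise bound on $\abs{\bd_t u_n(s,t)}^{2}$.

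With the estimate in hand, choose $\rho_n \sim C'\abs{\log \epsilon_n}$ with $C'$ large enough that $\rho_n \to \infty$, $\epsilon_n\rho_n \to 0$, and $\epsilon_n^{-1}\exp(-\tfrac{1}{2}\sqrt{\alpha}\,\rho_n) \to 0$. For part (i), the shifted maps $u_n^\pm$ on $[0,\rho_n]\times \R/\Z$ (resp.\ $[-\rho_n,0]\times \R/\Z$) satisfy $\norm{\bar{\bd}\, u_n^\pm}_\infty = O(\epsilon_n) \to 0$ with uniform $C^0$-bound, so interior $C^k$ estimates for $\bar{\bd}$, Arzel\`{a}--Ascoli, and a diagonal argument over an exhausting family of compact subsets of the half-cylinder produce a $C^\infty_{\mathrm{loc}}$ subsequential limit $u_\infty^\pm$ which is holomorphic. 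Passing the exponential estimate to the limit gives $\int_{[S,\infty)\times \R/\Z}\abs{\bd_t u_\infty^-}^2 \to 0$ as $S\to \infty$, so $u_\infty^-$ has finite energy on $[0,\infty)\times \R/\Z$; the classical removable-singularity theorem then extends it across $s=\infty$ to a holomorphic disk with asymptotic value $x_-$, and analogously for $u_n^+$ giving $x_+$.

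For part (ii), the rescaled map $v_n(s,t) = u_n(\epsilon_n^{-1}s,\epsilon_n^{-1}t)$ satisfies $\bd_s v_n + J_0 \bd_t v_n = V_n(v_n)$, and the chain rule gives $\abs{\bd_t v_n}(s,t) = \epsilon_n^{-1}\abs{\bd_t u_n}(\epsilon_n^{-1}s,\epsilon_n^{-1}t)$, which by the exponential estimate and the choice of $\rho_n$ tends to zero uniformly on the rescaled central cylinder. The equation then forces $\abs{\bd_s v_n}$ to be uniformly bounded, so $v_n$ is equicontinuous; a subsequence converges to a $t$-independent limit $v_\infty(s)$ satisfying the ODE $\bd_s v_\infty = V(v_\infty)$ with $V = \lim_n V_n$, i.e.\ a flow line. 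The parameter length is $2\epsilon_n(r_n - \rho_n) \to 2\ell$, and the endpoint matching $v_\infty(\pm \ell) = x_\pm$ follows from the identity $v_n(\pm \epsilon_n(r_n-\rho_n)) = u_n^\pm(\mp \rho_n)$ together with the exponential decay of $u_\infty^\pm$ near its puncture, extracted via a further diagonal subsequence.
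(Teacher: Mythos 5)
Your overall architecture is close to the paper's: an exponential a priori estimate in the center of the cylinder, elliptic compactness at the ends, and a rescaling argument in the middle. The paper uses the quantity $\gamma_n(s)=\tfrac{1}{2}\int_{\R/\Z}\abs{u_n-q_n}^2\,\d t$ built from the center of mass $q_n(s)=\int_{\R/\Z}u_n(s,t)\,\d t$, whereas you use $e_n(s)=\tfrac{1}{2}\int_{\R/\Z}\abs{\bd_t u_n}^2\,\d t$; both $u_n-q_n$ and $\bd_t u_n$ have mean zero in $t$, so Poincar\'e applies to either, and either choice can be made to work. However, there is a genuine gap in your proposal as written.

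You claim the differential inequality $e_n''\ge \alpha e_n - C\epsilon_n^2$, which after comparison yields $e_n(s)\le K\bigl(e^{-\sqrt{\alpha}(r_n-\abs{s})}+\epsilon_n^2\bigr)$, hence only $\abs{\bd_t u_n}\lesssim \epsilon_n$ away from the ends. This is \emph{not} enough for part (ii): the rescaled derivative $\abs{\bd_t v_n}=\epsilon_n^{-1}\abs{\bd_t u_n}$ is then only $O(1)$, not $o(1)$, no matter how you choose $\rho_n$ (the $\epsilon_n^2$ floor does not shrink relative to $\epsilon_n^2$ as you move inward). Your subsequent claim that $\abs{\bd_t v_n}\to 0$ uniformly on the central region, which is the lynchpin of the part-(ii) argument, therefore does not follow from your stated estimate. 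The good news is that the $-C\epsilon_n^2$ term is a mirage: $w=\bd_t u_n$ satisfies the \emph{homogeneous} linearized equation $\bd_s w + J_0\bd_t w = \epsilon_n\,\d V_n(u_n)\,w$, so every error term generated when computing $e_n''$ is quadratic in $(w,\bd_t w)$ with a prefactor $\epsilon_n$, e.g.\ bounded by $\epsilon_n C(\norm{w}^2+\norm{\bd_t w}^2)$, and can be absorbed by Poincar\'e for $n$ large, giving a clean $e_n''\ge \alpha' e_n$ with no forcing term. This is exactly the mechanism in the paper's Lemma \ref{lemma:diff-ineq}, where $u_n-q_n$ likewise satisfies a homogeneous equation $\cl{\bd}(u_n-q_n)=\epsilon_n A_n(u_n-q_n)$ by construction. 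You should redo this computation and delete the $\epsilon_n^2$ floor; with the clean inequality the rest of your part-(ii) argument goes through.

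One further remark on part (ii): you propose to run Arzel\`a--Ascoli directly on $v_n$, whose domain $[-\epsilon_n(r_n-\rho_n),\epsilon_n(r_n-\rho_n)]\times\R/\epsilon_n\Z$ degenerates as $n\to\infty$ (the circle factor shrinks). This requires some care to set up correctly. The paper sidesteps this by reducing to the center of mass $p_n(s)=q_n(\epsilon_n^{-1}s)$, which lives on an interval and satisfies a perturbed ODE $p_n'-V_n(p_n)=O(e^{-c\rho_n})$ (a consequence of \eqref{eq:perturb-flow} and the exponential estimate on $\abs{u_n-q_n}$); Arzel\`a--Ascoli for a sequence of curves $p_n:[-\epsilon_n(r_n-\rho_n),\epsilon_n(r_n-\rho_n)]\to\cl{B}(1)$ is then standard. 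Your observation that $v_n$ becomes $t$-independent in the limit is correct in spirit, but the center-of-mass reduction is cleaner and I'd recommend it.
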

\begin{figure}[h]
  \centering
\begin{tikzpicture}
  \draw (0,0) circle (0.3 and 1) (5,0) circle (0.3 and 1);
  \draw (0,1) arc (90:-90:1) node[below] {$u^{-}_{\infty}$}(5,1) arc (90:270:1)node[below]{$u^{+}_{\infty}$};
  \path (1,0)node[above right]{$x_{-}$}--(4,0)node[below left]{$x_{+}$};
  \draw[postaction={decorate,decoration={markings,mark=at position 0.5 with{\arrow[scale=1.5]{>}};}}] (1,0) to[out=-30,in=140]node[pos=0.4,below]{$v_{\infty}$} (4,0);
  \path[every node/.style={draw,circle,inner sep=1pt,fill}] (1,0)node{}--(4,0)node{};
\end{tikzpicture}
  \caption{Converging to the limiting configuration.}
  \label{fig:limiting-configuration}
\end{figure}
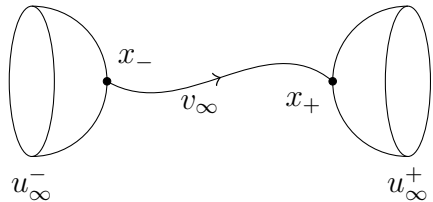
\begin{proof}
  Theorem \ref{theorem:1} is a combination of Lemmas \ref{lemma:endplate-convergence}, \ref{lemma:cutting}, and \ref{lemma:final-conv}, proved below.
\end{proof}

\begin{remark}
  Other researchers have considered this problem previously. See \cite{O3} for a similar compactness result and see \cite{OZ1} and \cite{OZ2} for an analysis of the reverse \emph{gluing} process. The paper \cite{OZ2} also shows exponential convergence to a gradient flow segment. See also \cite{MT} for a related compactness result which also exhibits the formation of gradient flow lines. These papers assume the energy of the holomorphic cylinders is small, while in this paper we assume that the image of $u_{n}$ lies inside of a ball $\cl{B}(1)$ (i.e.,\ we assume a $C^{0}$ bound whereas they assume an $L^{2}$ bound on the derivative).

  A similar compactness phenomenon in the context of harmonic mappings is proved in \cite{ct}, where geodesic segments appear rather than gradient flow lines.
\end{remark}
\begin{remark}
  In the course of our argument, we will need to pass to sub-cylinders a few times to apply the a priori estimates from Section \ref{sec:apriori}. As a consequence, our description of the limit is only valid on $[-r_{n},r_{n}]\times \R/\Z$. In applications, some other information will be needed to control the behavior of $u_{n}$ on the ends (e.g.,\ $u_{n}$ is the restriction of a map defined on a larger Riemann surface $\Sigma_{n}$ equipped with an embedding $[-r_{n}-1,r_{n}+1]\times \R/\Z\to \Sigma_{n}$, or $u_{n}$ has totally real boundary conditions on the two boundary circles).
\end{remark}
\begin{remark}
  If $\epsilon_{n}r_{n}$ converges to $\infty$, then we expect any convergence statement to require topological restrictions on $V$ (e.g.,\ if $V$ is a gradient vector field for a Morse function, then $u_{n}$ will converge to a broken flow line; or, if $V_{n}$ is a Morsification of a Morse-bott vector field $V$, then $u_{n}$ will converge to a Morse-Bott broken flow line, etc).
\end{remark}
\begin{remark}
  The set-up in Definition \ref{definition:set-up} is a bit of a ``toy problem'' since we do not consider more general almost complex structures $J$. The author expects more general almost complex structures can be handled in a similar way. The estimates/computations will involve many additional junk terms involving the derivatives of $J$ (i.e.,\ some things which cancel exactly in this paper will not cancel exactly with a more general $J$). Hopefully the ideas conveyed in this paper will still be of use to readers interested in more general almost complex structures.
\end{remark}

\section{A Priori Estimates}
\label{sec:apriori}
In this section we will prove various a priori estimates for maps $u_{n}$ satisfying Definition \ref{definition:set-up}. Our first result is a \emph{uniform a priori estimate} for $u_{n}$ and its derivatives. 
The elliptic estimates for the ``delbar'' operator $\cl{\bd}=\bd_{s}+J_{0}\bd_{t}$ imply that for every $k\in \mathbb{N}$ and $\delta>0$ there is a constant $C^{\mathrm{ell}}_{\delta,k}$ so that for any smooth $\R^{2n}$-valued function $u$ we have:
\begin{equation}\label{eq:elliptic-estimate}
  \norm{u}_{W^{k+1,2}(s+\Omega(\delta))}\le C^{\mathrm{ell}}_{\delta,k}(\norm{(\bd_{s}+J_{0}\bd_{t})u}_{W^{k,2}(s+\Omega(2\delta))}+\norm{u}_{W^{k,2}(s+\Omega(2\delta))}),
\end{equation}
where $\Omega(\delta)=[-\delta,\delta]\times \R/\Z$. See \cite[Lemma C.1]{robbinsalamon} for a short proof. In particular, the fact that $V_{n}$ converges in $C^{\infty}$ and $\abs{u_{n}}$ is everywhere less than $1$, together with the Sobolev embedding theorem, implies the following following result:
\begin{lemma}\label{lemma:unif-est}
  Assuming the set-up in Definition \ref{definition:set-up}, we have the following a priori estimates for the behavior of $u_{n}$ on the sub-cylinder $[-r_{n},r_{n}]\times \R/\Z$:
  \begin{equation}\label{eq:apriori3}
    \limsup_{n\to\infty}\sup_{(s,t)\in [-r_{n},r_{n}]\times \R/\Z}\abs{\nabla^{k}u_{n}(s,t)}\le C_{k},
  \end{equation}
  where $C_{k}$ depends only on $k$.
\end{lemma}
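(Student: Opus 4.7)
The plan is a standard bootstrap: start from the $L^{\infty}$ bound $\abs{u_{n}}\le 1$ (which is built into Definition~\ref{definition:set-up}), use the PDE \eqref{eq:1} to convert $W^{k,2}$ control of $u_{n}$ into $W^{k,2}$ control of $\cl{\bd}u_{n}=\epsilon_{n}V_{n}(u_{n})$, apply the elliptic estimate \eqref{eq:elliptic-estimate} to gain one derivative, and finish with Sobolev embedding.

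First I would set up the window sizes. Fix $\delta_{k}=2^{-k-1}$, so $\delta_{0}=1/2$ and each $\delta_{k+1}=\delta_{k}/2$. Since $u_{n}$ is defined on $[-r_{n}-1,r_{n}+1]\times \R/\Z$, for every $s\in [-r_{n},r_{n}]$ all the windows $s+\Omega(2\delta_{k})\subseteq s+\Omega(1)$ lie inside the domain of $u_{n}$. This uniform $1$-unit buffer is exactly what makes the estimate uniform in $s$: the elliptic estimate is translation invariant, so all constants below depend only on $k$, not on $s$ or $n$.

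Next I would carry out the induction. The base case is immediate from $\abs{u_{n}}\le 1$ together with $\abs{\cl{\bd}u_{n}}=\epsilon_{n}\abs{V_{n}(u_{n})}\le \epsilon_{n}\sup_{n}\norm{V_{n}}_{C^{0}(\cl{B}(1))}$ (finite because $V_{n}\to V$ in $C^{\infty}$). This gives uniform $W^{0,2}$ bounds on $u_{n}$ and $\cl{\bd}u_{n}$ over $s+\Omega(1)$. For the inductive step, assume a uniform bound $\limsup_{n}\sup_{s}\norm{u_{n}}_{W^{k+1,2}(s+\Omega(\delta_{k}))}\le B_{k}$. The Moser-type composition inequality, applied with $u_{n}$ valued in the fixed compact set $\cl{B}(1)$ and $V_{n}$ bounded in $C^{k+2}(\cl{B}(1))$ by convergence, gives a uniform bound
\[
\norm{V_{n}(u_{n})}_{W^{k+1,2}(s+\Omega(\delta_{k}))}\le C\bigl(k,B_{k},\sup_{n}\norm{V_{n}}_{C^{k+2}(\cl{B}(1))}\bigr).
\]
Multiplying by $\epsilon_{n}\le 1$ and plugging into \eqref{eq:elliptic-estimate} (with $2\delta=\delta_{k}$ and the window centered at $s$) upgrades this to a uniform $W^{k+2,2}$ bound on $u_{n}$ over the smaller window $s+\Omega(\delta_{k+1})$, closing the induction.

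Finally, the Sobolev embedding $W^{k+2,2}\hookrightarrow C^{k}$ on the two-dimensional strip $\Omega(\delta_{k+1})$ converts the uniform $W^{k+2,2}$ bound into a uniform pointwise bound on $\abs{\nabla^{k}u_{n}}$, as claimed. The only slightly delicate step is the composition estimate, which is standard but the one place where the hypothesis $V_{n}\to V$ in $C^{\infty}$ (rather than merely $C^{0}$) is essential; the rest is a mechanical iteration of the elliptic estimate and uses no global structure of the cylinder beyond translation invariance and the one-unit buffer.
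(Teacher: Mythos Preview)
Your proposal is correct and follows essentially the same route as the paper: iterate the elliptic estimate \eqref{eq:elliptic-estimate} on shrinking windows $s+\Omega(2^{-k})$ to bootstrap from the built-in $L^{\infty}$ bound to uniform $W^{k,2}$ bounds, then apply Sobolev embedding. The paper simply asserts the $W^{k,2}$ bootstrap is ``straightforward,'' whereas you spell out the one nontrivial ingredient it relies on---the Moser-type composition estimate for $V_{n}(u_{n})$, which is where the $C^{\infty}$ convergence of $V_{n}$ enters---so your write-up is more complete but not substantively different.
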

\begin{proof}
  It is straightforward to show that
  \begin{equation*}
    \limsup_{n\to\infty}\norm{u_{n}}_{W^{k,2}(s+\Omega(2^{-k}))}\le C_{k}^{\prime}.
  \end{equation*}  
  Then the Sobolev embedding theorem (see \cite[Theorem B.1.11]{mcduffsalamon}) gives $$\abs{\nabla^{k}u(s,t)}\le \norm{u_{n}}_{C^{k}(s+\Omega(2^{-k-2}))}\le C^{\mathrm{Sob}}_{k}\norm{u_{n}}_{W^{k+2,2}(s+\Omega(2^{-k-2}))}.$$
  These two observations complete the proof.
\end{proof}
Next, we will prove an \emph{exponential type estimate} that shows $u_{n}$ becomes very thin as we approach the center of the cylinder. 

\begin{definition}\label{definition:center-of-mass}
  Introduce the \emph{center of mass} $q_{n}:[-r_{n}-1,r_{n}+1]\to B(1)$ given by 
  \begin{equation*}
    q_{n}(s)=\int_{\R/\Z}u_{n}(s,t)\,\d t.
  \end{equation*}
  Assuming that $u_{n}$ satisfies Definition \ref{definition:set-up}, then we compute
  \begin{equation}\label{eq:perturb-flow}
    q_{n}^{\prime}(s)-\epsilon_{n}V_{n}(q_{n}(s))=\epsilon_{n}\int_{\R/\Z}V_{n}(u_{n}(s,t))-V_{n}(q_{n}(s))\,\d t.
  \end{equation}
  We think of of \eqref{eq:perturb-flow} as a perturbation of the flow line equation for $V_{n}$.
\end{definition}

Introduce the function
\begin{equation}\label{eq:definition-gamma}
  \gamma_{n}(s)=\frac{1}{2}\int_{\R/\Z}\abs{u_{n}(s,t)-q_{n}(s)}^{2}\,\d t.
\end{equation}
Our goal is to show that $\gamma_{n}^{\prime\prime}(s)-\delta^{2}\gamma_{n}(s)\ge 0$ for some $\delta>0$. As explained in the pre-print \cite[Section 4.4]{cant2021exponential} the estimate $\gamma^{\prime\prime}-\delta^{2}\gamma\ge 0$ is related to exponential decay estimates.

\begin{remark}
  For simplicity of notation, we introduce the norm:
  \begin{equation*}
    \norm{f}^{2}=\int_{\R/\Z}\abs{f(t)}^{2}\,\d t.
  \end{equation*}
\end{remark}
\begin{lemma}\label{lemma:diff-ineq}
  There is a constant $\delta>0$, independent of $n$ and $u_{n}$, so that $\gamma_{n}$ satisfies the following differential inequality
  \begin{equation}\label{eq:diff-ineq}
    \gamma_{n}''(s)-\delta^{2}\gamma_{n}(s)\ge \frac{3}{4}\norm{\bd_{s}(u_{n}-q_{n})}^{2}+\frac{1}{4}\norm{\bd_{t}(u_{n}-q_{n})}^{2}
  \end{equation}
   for $s\in [-r_{n},r_{n}]$ and for $n$ sufficiently large. More precisely, there is $N$ depending on $\set{\epsilon_{n}}$ so that \eqref{eq:diff-ineq} holds for $n\ge N$ and $s\in [-r_{n},r_{n}]$.
 \end{lemma}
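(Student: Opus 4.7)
My plan is to differentiate $\gamma_{n}$ twice, use \eqref{eq:1} to convert $s$-derivatives into $t$-derivatives, and then apply the Poincar\'e--Wirtinger inequality on $\R/\Z$ to produce the term $\delta^{2}\gamma_{n}$ on the right-hand side of \eqref{eq:diff-ineq}.

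Since $\int_{\R/\Z}(u_{n}-q_{n})\,\d t = 0$, the term $\int\ip{u_{n}-q_{n},q_{n}'}\,\d t$ vanishes, giving $\gamma_{n}'(s) = \int\ip{u_{n}-q_{n},\bd_{s}u_{n}}\,\d t$. Differentiating once more and exploiting the mean-zero property again yields
\begin{equation*}
\gamma_{n}''(s) = \norm{\bd_{s}(u_{n}-q_{n})}^{2} + \int\ip{u_{n}-q_{n},\bd_{s}^{2}u_{n}}\,\d t.
\end{equation*}
Differentiating \eqref{eq:1} in $s$ and substituting $\bd_{t}\bd_{s}u_{n}$ back using \eqref{eq:1} gives
\begin{equation*}
\bd_{s}^{2}u_{n} = -\bd_{t}^{2}u_{n} + \epsilon_{n}\bigl(dV_{n}(u_{n})\bd_{s}u_{n} - J_{0}\,dV_{n}(u_{n})\bd_{t}u_{n}\bigr).
\end{equation*}
Integrating the $-\bd_{t}^{2}u_{n}$ contribution by parts in $t$ on $\R/\Z$ and using $\bd_{t}q_{n}=0$ produces $\norm{\bd_{t}(u_{n}-q_{n})}^{2}$, so I arrive at the clean identity
\begin{equation*}
\gamma_{n}''(s) = \norm{\bd_{s}(u_{n}-q_{n})}^{2} + \norm{\bd_{t}(u_{n}-q_{n})}^{2} + E_{n}(s),
\end{equation*}
where $E_{n}(s) = \epsilon_{n}\int\ip{u_{n}-q_{n},\,dV_{n}(u_{n})\bd_{s}u_{n} - J_{0}\,dV_{n}(u_{n})\bd_{t}u_{n}}\,\d t$.

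Rearranging, the target inequality \eqref{eq:diff-ineq} reduces to showing
\begin{equation*}
\tfrac{1}{4}\norm{\bd_{s}(u_{n}-q_{n})}^{2} + \tfrac{3}{4}\norm{\bd_{t}(u_{n}-q_{n})}^{2} + E_{n}(s) \ge \delta^{2}\gamma_{n}(s).
\end{equation*}
The Poincar\'e--Wirtinger inequality $\norm{u_{n}-q_{n}}^{2}\le(4\pi^{2})^{-1}\norm{\bd_{t}(u_{n}-q_{n})}^{2}$ on the unit circle already provides $\tfrac{3}{4}\norm{\bd_{t}(u_{n}-q_{n})}^{2}\ge 6\pi^{2}\gamma_{n}$, leaving ample slack. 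To control $E_{n}$ I split $\bd_{s}u_{n} = \bd_{s}(u_{n}-q_{n}) + q_{n}'$: the $q_{n}'$ piece is handled by replacing $dV_{n}(u_{n})$ with $dV_{n}(u_{n})-dV_{n}(q_{n})$ (the subtracted $t$-independent piece is annihilated by $\int\ip{u_{n}-q_{n},\cdot}\,\d t$), whence Taylor's theorem together with Lemma~\ref{lemma:unif-est} produces an $O(\epsilon_{n}\gamma_{n})$ contribution. The remaining factors of $\bd_{s}(u_{n}-q_{n})$ and $\bd_{t}u_{n}=\bd_{t}(u_{n}-q_{n})$ are bounded by Cauchy--Schwarz as $\epsilon_{n}C\norm{u_{n}-q_{n}}(\norm{\bd_{s}(u_{n}-q_{n})} + \norm{\bd_{t}(u_{n}-q_{n})})$; trading $\norm{u_{n}-q_{n}}$ for $(2\pi)^{-1}\norm{\bd_{t}(u_{n}-q_{n})}$ via Poincar\'e--Wirtinger and applying AM--GM converts this into $O(\epsilon_{n})\cdot(\norm{\bd_{s}(u_{n}-q_{n})}^{2} + \norm{\bd_{t}(u_{n}-q_{n})}^{2})$. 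For $n$ large these $O(\epsilon_{n})$ multipliers are absorbed into the $(1/4, 3/4)$-weighted quadratic form, and a positive multiple of $\gamma_{n}$ remains to swallow the $O(\epsilon_{n}\gamma_{n})$ piece; any $\delta^{2}<4\pi^{2}$ then works once $n$ is sufficiently large.

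The main obstacle is the careful bookkeeping for $E_{n}$: the naive bound $|E_{n}|\le C\epsilon_{n}\sqrt{\gamma_{n}}$ obtained by Cauchy--Schwarz and Lemma~\ref{lemma:unif-est} is useless in the regime $\gamma_{n}\sim\epsilon_{n}^{2}$, since Young's inequality then leaves an unabsorbable $O(\epsilon_{n}^{2})$ remainder. The fix is to always keep a derivative factor $\bd_{s}(u_{n}-q_{n})$ or $\bd_{t}(u_{n}-q_{n})$ explicit after Cauchy--Schwarz, using mean-zero subtractions to remove $t$-independent pieces of $dV_{n}$ and $\bd_{s}u_{n}$, so that these derivative factors can be absorbed into the positive quadratic form from the main identity.
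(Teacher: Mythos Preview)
Your proof is correct and follows essentially the same route as the paper: compute $\gamma_n''$, use \eqref{eq:1} to obtain the identity $\gamma_n'' = \norm{\bd_s(u_n-q_n)}^2 + \norm{\bd_t(u_n-q_n)}^2 + (\text{error of order }\epsilon_n)$, bound the error in terms of $u_n-q_n$ and its first derivatives via mean-zero subtractions, and close with the Poincar\'e inequality. The only cosmetic difference is that the paper first packages the inhomogeneity as $V_n(u_n)-\int V_n(u_n)\,\d t = A_n\cdot(u_n-q_n)$ (fundamental theorem of calculus) and \emph{then} applies $\bd_s-J_0\bd_t$, so the pointwise bound $|\beta_n|\le C(|u_n-q_n|+|\nabla(u_n-q_n)|)$ drops out by the product rule without your separate handling of the $q_n'$ piece.
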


 \begin{proof}
  We compute
  \begin{equation*}
    \gamma_{n}^{\prime\prime}(s)=\norm{\bd_{s}(u_{n}-q_{n})}^{2}+\int_{\R/\Z}\ip{u_{n}-q_{n},\bd_{s}\bd_{s}(u_{n}-q_{n})}\d t.
  \end{equation*}
  We have
  \begin{equation}\label{eq:holo-avg}
    \bd_{s}(u_{n}-q_{n})=-J_{0}\bd_{t}(u_{n}-q_{n})+\epsilon_{n}(V_{n}(u_{n})-\int_{\R/\Z} V_{n}(u_{n})\,\d t),
  \end{equation}
  hence
  \begin{equation*}
    \bd_{s}\bd_{s}(u_{n}-q_{n})=-\bd_{t}\bd_{t}(u_{n}-q_{n})+\epsilon_{n}(\bd_{s}-J_{0}\bd_{t})[V_{n}(u_{n})-\int_{\R/\Z}V_{n}(u_{n})\,\d t].
  \end{equation*}
  Integration by parts then yields
  \begin{equation}\label{eq:integration-by-parts}
    \gamma_{n}^{\prime\prime}(s)=\norm{\bd_{s}(u_{n}-q_{n})}^{2}+\norm{\bd_{t}(u_{n}-q_{n})}^{2}+\epsilon_{n}\int\ip{u_{n}-q_{n},\beta_{n}}\d t,
  \end{equation}
  where
  \begin{equation*}
    \beta_{n}=(\bd_{s}-J_{0}\bd_{t})[V_{n}(u_{n})-\int_{\R/\Z}V_{n}(u_{n})\,\d t].
  \end{equation*}
  We rewrite $\beta_{n}$ as follows:
  \begin{equation*}
    \begin{aligned}
      V_{n}(u_{n})-\int_{\R/\Z}V_{n}(u_{n})&=V_{n}(u_{n})-V_{n}(q_{n})+\int_{\R/\Z}V_{n}(u_{n})-V_{n}(q_{n})\,\d t\\
      &=A_{n}\cdot (u_{n}-q_{n}),
    \end{aligned}
  \end{equation*}
  where
  \begin{equation}\label{eq:An-derivate}
    A_{n}=\int_{0}^{1}\d V_{n}((1-\tau)q_{n}+\tau u_{n})\d \tau-\int_{\R/\Z}\int_{0}^{1}\d V_{n}((1-\tau)q_{n}+\tau u_{n})\d \tau\d t.
  \end{equation}
  As a consequence of the a priori estimates \eqref{eq:apriori3} we conclude that $A_{n}$ is uniformly bounded in $C^{1}$ on the cylinder $[-r_{n},r_{n}]\times \R/\Z$. For $n$ sufficiently large (say $n\ge N$), we can bound its derivatives by a constant independent of $u_{n}$.

  Since
  \begin{equation*}
    \beta_{n}=((\bd_{s}-J_{0}\bd_{t})A_{n})\cdot (u_{n}-q_{n})+A_{n}\cdot ((\bd_{s}-J_{0}\bd_{t})(u_{n}-q_{n})),
  \end{equation*}
  we conclude that for $s\in [-r_{n},r_{n}]$
  \begin{equation*}
    \abs{\beta_{n}}\le C_{1}(\abs{u_{n}-q_{n}}+\abs{\bd_{s}(u_{n}-q_{n})}+\abs{\bd_{t}(u_{n}-q_{n})}),
  \end{equation*}
  where $C_{1}$ is independent of $n\ge N$ and $u_{n}$. By applying the Cauchy-Schwarz inequality, and the inequality $2ab \le a^{2}+b^{2}$, we conclude from \eqref{eq:integration-by-parts} that
  \begin{equation*}
    \begin{aligned}
      \gamma_{n}^{\prime\prime}(s)&\ge \norm{\bd_{s}(u_{n}-q_{n})}^{2}+\norm{\bd_{t}(u_{n}-q_{n})}^{2}\\&\hspace{1cm}-\epsilon_{n}C_{2}(\norm{u_{n}-q_{n}}^{2}+\norm{\bd_{s}(u_{n}-q_{n})}^{2}+\norm{\bd_{t}(u_{n}-q_{n})}^{2}),
    \end{aligned}
  \end{equation*}
  where $C_{2}$ is independent of $n\ge N$ and $u_{n}$. The next step in the argument is to invoke the \emph{Poincar\'e Inequality}, which states that there is a constant $c_{\mathrm{pc}}>0$ so that
  \begin{equation*}
    \text{$f:\R/\Z\to \R^{N}$ has mean zero}\implies\norm{f}^{2}\le c_{\mathrm{pc}}\norm{\bd_{t}f}^{2}.
  \end{equation*}
  A simple proof is available using Fourier analysis. Noting that $u_{n}-q_{n}$ has mean zero (by construction), we conclude that we can pick $n$ sufficiently large so that $\epsilon_{n}C_{2}$ is less than $\min\set{0.25 c_{\mathrm{pc}}^{-1},0.25}$, which yields
  \begin{equation*}
    \gamma_{n}''(s)-\frac{1}{4c_{\mathrm{pc}}}\gamma_{n}(s)\ge \frac{3}{4}\norm{\bd_{s}(u_{n}-q_{n})}^{2}+\frac{1}{4}\norm{\bd_{t}(u_{n}-q_{n})}^{2},
  \end{equation*}
  provided $s\in [-r_{n},r_{n}]$. Setting $\delta^{2}=(4c_{\mathrm{pc}})^{-1}$ yields the desired result.
\end{proof}

As in \cite{cant2021exponential}, we use the differential inequality \eqref{eq:diff-ineq} to conclude the following \emph{exponential a priori estimate} on the $W^{1,2}$ size of $u_{n}-q_{n}$:
\begin{lemma}\label{lemma:exponential-estimate}
  Suppose that $u_{n}$ satisfies Definition \ref{definition:set-up} and $q_{n}$ is the center of mass given by Definition \ref{definition:center-of-mass}. There is a constant $C$, independent of $u_{n}$ or $r_{n}$, so that
  \begin{equation*}
    \begin{aligned}      \int_{s-0.5}^{s+0.5}&\norm{u_{n}-q_{n}}^{2}+\norm{\bd_{s}(u_{n}-q_{n})}^{2}+\norm{\bd_{t}(u_{n}-q_{n})}^{2}\d s\\&\hspace{5cm}\le C(e^{-\delta(r_{n}+s)}+e^{-\delta(r_{n}-s)})
    \end{aligned}
  \end{equation*}
  for $s\in [-r_{n}+1,r_{n}-1]$ and for $n$ sufficiently large. Notice that the left hand side is the $W^{1,2}$ norm (squared) over the sub-cylinder $[s-0.5,s+0.5]\times\R/\Z$. Here $\delta>0$ is the constant appearing in the differential inequality \eqref{eq:diff-ineq}.
\end{lemma}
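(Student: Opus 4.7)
The plan is to prove the lemma in two steps: first, use the differential inequality \eqref{eq:diff-ineq} together with an ODE comparison to obtain the pointwise bound $\gamma_{n}(s)\le C(e^{-\delta(r_{n}+s)}+e^{-\delta(r_{n}-s)})$; then bootstrap this $L^{2}$-in-$t$ estimate on $w_{n}:=u_{n}-q_{n}$ up to a full $W^{1,2}$ bound using the delbar elliptic estimate \eqref{eq:elliptic-estimate}.

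For the first step, set $\phi(s)=e^{-\delta(r_{n}+s)}+e^{-\delta(r_{n}-s)}$, which satisfies $\phi''-\delta^{2}\phi=0$ and $\phi(\pm r_{n})\ge 1$. Since $\abs{u_{n}}\le 1$ gives $\gamma_{n}\le 2$ everywhere, in particular at $s=\pm r_{n}$, Lemma \ref{lemma:diff-ineq} implies that $h=2\phi-\gamma_{n}$ satisfies $h''-\delta^{2}h\le 0$ on $[-r_{n},r_{n}]$ with $h\ge 0$ at the endpoints. A minimum-principle argument now finishes this step: at any interior minimum $s_{0}$ of $h$ we would have $h''(s_{0})\ge 0$ and simultaneously $h''(s_{0})\le \delta^{2}h(s_{0})$, ruling out $h(s_{0})<0$. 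Hence $\gamma_{n}(s)\le 2\phi(s)$ throughout $[-r_{n},r_{n}]$.

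For the second step, integrating \eqref{eq:1} in $t$ yields $q_{n}'(s)=\epsilon_{n}\int_{\R/\Z}V_{n}(u_{n})\,\d t$, so
\begin{equation*}
  (\bd_{s}+J_{0}\bd_{t})w_{n}=\epsilon_{n}\Bigl(V_{n}(u_{n})-\int_{\R/\Z}V_{n}(u_{n})\,\d t\Bigr).
\end{equation*}
Using the uniform $C^{1}$-bound on $V_{n}$ (from Lemma \ref{lemma:unif-est}) and expanding $V_{n}(u_{n})-V_{n}(q_{n})$ as a $\tau$-integral of $\d V_{n}$ along the segment from $q_{n}$ to $u_{n}$ as in the proof of Lemma \ref{lemma:diff-ineq}, the triangle inequality and Cauchy--Schwarz yield the pointwise bound $\abs{(\bd_{s}+J_{0}\bd_{t})w_{n}(s,t)}\le C_{1}\epsilon_{n}(\abs{w_{n}(s,t)}+\norm{w_{n}(s,\cdot)})$, hence $\norm{(\bd_{s}+J_{0}\bd_{t})w_{n}(s,\cdot)}\le C_{2}\epsilon_{n}\norm{w_{n}(s,\cdot)}$. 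Applying \eqref{eq:elliptic-estimate} with $k=0$ and the parameter there set to $1/2$ gives
\begin{equation*}
  \norm{w_{n}}_{W^{1,2}(s+\Omega(1/2))}\le C^{\mathrm{ell}}_{1/2,0}(1+C_{2}\epsilon_{n})\norm{w_{n}}_{L^{2}(s+\Omega(1))}.
\end{equation*}
Squaring and using $\norm{w_{n}}_{L^{2}(s+\Omega(1))}^{2}=\int_{s-1}^{s+1}2\gamma_{n}(\sigma)\,\d\sigma$, the first step bounds the right-hand side by a universal constant times $e^{-\delta(r_{n}+s)}+e^{-\delta(r_{n}-s)}$ for $s\in[-r_{n}+1,r_{n}-1]$.

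The main technical point I anticipate is the estimate $\norm{(\bd_{s}+J_{0}\bd_{t})w_{n}(s,\cdot)}\le C_{2}\epsilon_{n}\norm{w_{n}(s,\cdot)}$: controlling the delbar of $w_{n}$ by a constant times $\norm{w_{n}}$ itself (not merely by $\epsilon_{n}$) is what keeps the $\epsilon_{n}$ factor a harmless multiplier and allows the elliptic estimate to preserve the exponential decay obtained in step one; if one only used the naive $O(\epsilon_{n})$ bound on $\bar\partial w_{n}$, the elliptic inequality would leave an additive $\epsilon_{n}$-sized error that is not exponentially small in $r_{n}$.
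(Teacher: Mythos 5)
Your first step matches the paper's: both compare $\gamma_{n}$ against the explicit solution $\phi(s)=e^{-\delta(r_{n}+s)}+e^{-\delta(r_{n}-s)}$ of $\phi''-\delta^{2}\phi=0$ via a maximum-principle argument and conclude $\gamma_{n}\le c\,\phi$ pointwise on $[-r_{n},r_{n}]$ (you take $c=2$ from the crude bound $|u_{n}-q_{n}|\le 2$; the paper invokes \eqref{eq:apriori3}, which amounts to the same thing). The second step is where you diverge, and your route is genuinely different but correct. The paper keeps working with the differential inequality \eqref{eq:diff-ineq}: it retains the positive right-hand side $\tfrac34\norm{\bd_{s}w_{n}}^{2}+\tfrac14\norm{\bd_{t}w_{n}}^{2}$, convolves the whole inequality against a bump function $\rho$ supported in $[-1,1]$, and uses explicit $L^{1}$ bounds on $\rho$ and $\rho''$ to convert the pointwise decay of $\gamma_{n}$ into decay of the windowed derivative integrals --- a purely ODE-theoretic argument with no elliptic input. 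You instead discard the derivative terms on the right of \eqref{eq:diff-ineq} after step one and bootstrap the $L^{2}$ decay up to $W^{1,2}$ by applying the elliptic estimate \eqref{eq:elliptic-estimate} to the delbar equation \eqref{eq:holo-avg} for $w_{n}=u_{n}-q_{n}$. This is exactly the mechanism the paper uses one lemma later, in Lemma \ref{lemma:apriori-exp-fin}, to pass from $W^{1,2}$ to $W^{k,2}$; so your proof effectively merges this lemma's second half with the subsequent bootstrap and avoids the bump-function bookkeeping, at the price of introducing the elliptic machinery one step earlier. Your closing observation correctly isolates the crux of the elliptic route: one must bound $\bar\bd w_{n}$ by $\epsilon_{n}$ times a multiple of $\norm{w_{n}}$ itself (via $V_{n}(u_{n})-\int V_{n}(u_{n})=A_{n}\cdot w_{n}$ with $A_{n}$ uniformly bounded), not merely by $O(\epsilon_{n})$, since otherwise \eqref{eq:elliptic-estimate} injects an additive $\epsilon_{n}$-sized error that would not be exponentially small in $r_{n}$.
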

\begin{proof}
  Let $\gamma_{n}$ be the quantity introduced in \eqref{eq:definition-gamma}. Without loss of generality, suppose that $n$ is sufficiently large so that the conclusion of Lemma \ref{lemma:diff-ineq} holds, say $n\ge N$.

  It is clear that $\gamma_{n}(-r_{n})$ and $\gamma_{n}(r_{n})$ are both bounded by some constant $c>0$ (thanks to the estimates \eqref{eq:apriori3}). Then  $\beta_{n}=\gamma_{n}-c(e^{-\delta(r_{n}+s)}+e^{-\delta(r_{n}-s)})$ satisfies $\beta_{n}''-\delta^{2}\beta_{n}\ge 0$, and $\beta_{n}(-r_{n})$ and $\beta_{n}(r_{n})$ are both non-positive.

  As a consequence, $\beta_{n}$ must be non-positive everywhere, otherwise we would conclude a positive maximum, which is precluded by $\beta_{n}''\ge \delta^{2}\beta_{n}$. In particular, we conclude that
  \begin{equation}\label{eq:pointwise-on-gamma}
    \gamma_{n}(s)\le c(e^{-\delta(r_{n}+s)}+e^{-\delta(r_{n}-s)}),
  \end{equation}
  for all $s\in [r_{n}-1,r_{n}+1]$ and for $n\ge N$. Integrating this estimate over $[s-0.5,s+0.5]$, we find a constant $C$ so that
  \begin{equation}\label{eq:one-third}
    \int_{s-0.5}^{s+0.5}\norm{u_{n}-q_{n}}^{2}\d s\le \frac{C}{3}(e^{-\delta(r_{n}+s)}+e^{-\delta(r_{n}-s)}),
  \end{equation}
  for $n\ge N$. It remains to estimate the integrals of $\norm{\bd_{s}(u_{n}-q_{n})}^{2}$ and $\norm{\bd_{t}(u_{n}-q_{n})}$. To do so, we will use a convolution trick. Fix a bump function $\rho:\R\to [0,1]$ supported in $[-1,1]$ and equal to $1$ on $[-0.5,0.5]$. As explained in the caption to Figure \ref{fig:bump-fxn}, this can be done with $\norm{\rho''}_{L^{1}}\le 40$ and $\norm{\rho}_{L^{1}}\le 2$.

\begin{figure}[H]
  \centering
  \begin{tikzpicture}[xscale=1.5,yscale=1.5]
  \draw[opacity=0.3] (-2.99,-0.5) grid[xstep=.5,ystep=.5] (2.99,1.5);
  \draw[opacity=0.3] (-3,-0.5) rectangle (3,1.5);
  \draw[line width=1pt] (-3,0)--(-1,0)to[out=0,in=180](-0.5,1)to[out=0,in=180](0.5,1)to[out=0,in=180](1,0)--(3,0);
  \node at (0.5,-0.5)[below]{$0.5$};
  \node at (-0.5,-0.5)[below]{$-0.5\hphantom{-}$};

  \node at (1,-0.5)[below]{$1$};
  \node at (-1,-0.5)[below]{$-1\hphantom{-}$};
\end{tikzpicture}
  \caption{The bump function $\rho$. We have $\norm{\rho}_{L^{1}}\le 2$. Moreover, this can be achieved with $\norm{\rho''}_{L^{1}}\le 4\pi^{2}+\epsilon\le 40$, because one can take a smooth $\epsilon$-approximation of the $C^{2}$ function $0.5-0.5\cos(2\pi x)$, which interpolates from $0$ to $1$ over an interval of size $0.5$ (we need two copies of this function, one of which is reflected through the $y$-axis).}
  \label{fig:bump-fxn}
\end{figure}
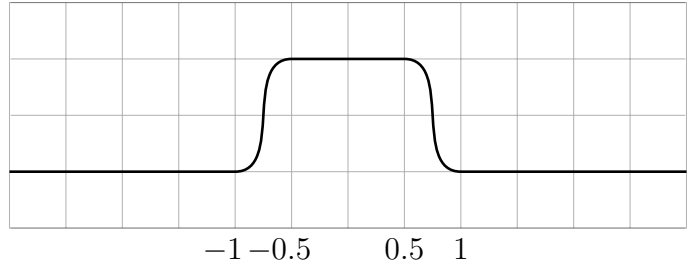
  
  We then convolve both sides of the differential inequality \eqref{eq:diff-ineq} to obtain
  \begin{equation}\label{eq:post-convolution}
    (\rho^{\prime\prime}\ast \gamma_{n})(s)-\delta^{2}(\rho\ast \gamma)(s)\ge \frac{3}{4}\rho\ast \norm{\bd_{s}(u_{n}-q_{n})}^{2}+\frac{1}{4}\rho\ast \norm{\bd_{t}(u_{n}-q_{n})}^{2}.
  \end{equation}

  For functions $f$ supported in $[-1,1]$, it is simple to apply \eqref{eq:pointwise-on-gamma} to estimate
  \begin{equation*}
    \abs{f\ast \gamma}(s)\le \max_{s^{\prime}\in [s-1,s+1]}\abs{\gamma(s^{\prime})}\norm{f}_{L^{1}}\le c\norm{f}_{L^{1}}(e^{-\delta(r+s)}+e^{-\delta(r-s)}).
  \end{equation*}
  This observation, with $f=\rho$ and $f=\rho^{\prime\prime}$ applied to \eqref{eq:post-convolution}, yields
  \begin{equation*}
    \rho\ast (\norm{\bd_{s}(u_{n}-q_{n})}^{2}+\norm{\bd_{t}(u_{n}-q_{n})}^{2})\le 4(40+2\delta^{2})c(e^{-\delta(r+s)}+e^{-\delta(r-s)}).
  \end{equation*}
  Since $\rho=1$ on $[-0.5,0.5]$, we obtain
  \begin{equation*}
    \int_{s-0.5}^{s+0.5}\norm{\bd_{s}(u_{n}-q_{n})}^{2}+\norm{\bd_{t}(u_{n}-q_{n})}^{2}\,\d s\le  \rho\ast (\norm{\bd_{s}(u_{n}-q_{n})}^{2}+\norm{\bd_{t}(u_{n}-q_{n})}^{2}).
  \end{equation*}
  and so, if we pick $C\ge 6(40+2\delta^{2})c$, then we conclude that
  \begin{equation*}
    \int_{s-0.5}^{s+0.5}\norm{\bd_{s}(u_{n}-q_{n})}^{2}+\norm{\bd_{t}(u_{n}-q_{n})}^{2}\,\d s\le\frac{2C}{3}(e^{-\delta(r+s)}+e^{-\delta(r-s)}).
  \end{equation*}
  Combining this with \eqref{eq:one-third} completes the proof of the lemma.  
\end{proof}

We think of Lemma \ref{lemma:exponential-estimate} as being an a priori exponential estimate for the $W^{1,2}$ norm. We will now use the elliptic estimates \eqref{eq:elliptic-estimate} to obtain similar estimates in the $C^{k}$ norm:
\begin{lemma}\label{lemma:apriori-exp-fin}
  Suppose that $u_{n}$ satisfies Definition \ref{definition:set-up} and $q_{n}$ is the center of mass given by Definition \ref{definition:center-of-mass}. For $n$ sufficiently large and each $k\ge 0$, there is a constant $M_{k}$ independent of $u_{n}$ or $r_{n}$ so that
  \begin{equation*}
    \abs{\nabla^{k}(u_{n}-q_{n})(s,t)}\le M_{k}(e^{-c(r_{n}+s)}+e^{-c(r_{n}-s)}),
  \end{equation*}
  for $s\in [-r_{n}+1,r_{n}-1]$, where $c=\delta/2>0$ is the constant appearing in the differential inequality \eqref{eq:diff-ineq}.
\end{lemma}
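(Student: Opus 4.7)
The plan is to bootstrap the $W^{1,2}$ exponential estimate of Lemma \ref{lemma:exponential-estimate} to a pointwise $C^k$ estimate by iteratively applying the interior elliptic estimate \eqref{eq:elliptic-estimate} to the difference $w_n := u_n - q_n$. The key observation is that $w_n$ satisfies a linear Cauchy--Riemann type equation whose coefficients are uniformly $C^k$-bounded in $n$, so all higher Sobolev norms of $w_n$ on a small cylinder are controlled by its $W^{1,2}$ norm on a slightly larger cylinder.

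First I would derive the PDE for $w_n$. Since $q_n$ depends only on $s$ and $q_n'(s) = \epsilon_n \int_{\R/\Z} V_n(u_n)\,\d t$ (by integrating \eqref{eq:1} in $t$), identity \eqref{eq:holo-avg} becomes
\begin{equation*}
  (\bd_s + J_0\bd_t)w_n = \epsilon_n\bigl(V_n(u_n) - \textstyle\int V_n(u_n)\,\d t\bigr) = \epsilon_n A_n\cdot w_n,
\end{equation*}
where $A_n$ is the matrix in \eqref{eq:An-derivate}. By Lemma \ref{lemma:unif-est} and the $C^\infty$-convergence $V_n\to V$, each $C^k$-norm of $A_n$ on $[-r_n,r_n]\times \R/\Z$ is uniformly bounded in $n$. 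Then, picking a strictly decreasing sequence of radii $1/2 = \delta_0 > \delta_1 > \cdots > \delta_{k+2} > 0$, applying \eqref{eq:elliptic-estimate} to $w_n$ and using the product rule for Sobolev norms yields
\begin{equation*}
  \|w_n\|_{W^{j+1,2}(s+\Omega(\delta_{j+1}))} \leq C_j\bigl(\|\epsilon_n A_n w_n\|_{W^{j,2}(s+\Omega(\delta_j))} + \|w_n\|_{W^{j,2}(s+\Omega(\delta_j))}\bigr) \leq C_j'\|w_n\|_{W^{j,2}(s+\Omega(\delta_j))}.
\end{equation*}
Iterating for $j = 1,\dots,k+1$ and then invoking the Sobolev embedding $W^{k+2,2}\hookrightarrow C^k$ produces a constant $M_k'$ with
\begin{equation*}
  |\nabla^k w_n(s,t)| \leq M_k'\|w_n\|_{W^{1,2}(s+\Omega(1/2))}
\end{equation*}
for every $(s,t)$ with $s\in[-r_n+1,r_n-1]$ (so that the largest cylinder in the iteration lies inside the region where Lemma \ref{lemma:exponential-estimate} applies).

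Finally, Lemma \ref{lemma:exponential-estimate} bounds the squared right-hand side by $C(e^{-\delta(r_n+s)} + e^{-\delta(r_n-s)})$. Taking square roots and using the elementary inequality $\sqrt{a+b}\leq\sqrt{a}+\sqrt{b}$ gives
\begin{equation*}
  |\nabla^k w_n(s,t)| \leq M_k\bigl(e^{-(\delta/2)(r_n+s)} + e^{-(\delta/2)(r_n-s)}\bigr),
\end{equation*}
which is the claim with $c = \delta/2$. The only slightly delicate step is bookkeeping the shrinking neighborhoods through the iteration; the factor of $1/2$ in the exponent is not an artifact of the argument but arises because Lemma \ref{lemma:exponential-estimate} controls $\|w_n\|_{W^{1,2}}^2$ rather than $\|w_n\|_{W^{1,2}}$, so a square root is unavoidable in the pointwise conversion.
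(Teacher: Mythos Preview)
Your proof is correct and follows essentially the same route as the paper: derive the linear equation $(\bd_s+J_0\bd_t)w_n=\epsilon_nA_nw_n$ with uniformly $C^k$-bounded $A_n$, iterate the elliptic estimate \eqref{eq:elliptic-estimate} over a nested family of cylinders to pass from $W^{1,2}$ to $W^{k+2,2}$, and then apply Sobolev embedding together with Lemma~\ref{lemma:exponential-estimate}. Your explicit remark that the exponent $c=\delta/2$ arises from taking a square root of the squared $W^{1,2}$ bound is exactly the point the paper leaves implicit; the only cosmetic difference is that the paper uses the dyadic radii $\Omega_k=\Omega(2^{-k})$ (matching the $\delta,2\delta$ form of \eqref{eq:elliptic-estimate}) rather than an arbitrary decreasing sequence $\delta_j$.
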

\begin{proof}
  Let $\Omega_{k}=[2^{-k},2^{k}]\times \R/\Z$. It suffices to show that there are constants $S_{k}$ independent of $u_{n}$ and $r_{n}$ so that
  \begin{equation}\label{eq:sobolev-exponential}
    \norm{u_{n}-q_{n}}_{W^{k,2}(s+\Omega_{k})}\le S_{k}(e^{-c(r_{n}+s)}+e^{-c(r_{n}-s)}),
  \end{equation}
  for $n$ sufficiently large and $s\in [-r_{n}+1,r_{n}-1]$. As in the proof of Lemma \ref{lemma:unif-est}, the Sobolev embedding theorem will then imply the desired $C^{k}$ estimates. Pick $n\ge N$ for $N$ large enough so that the $k=1$ case of \eqref{eq:sobolev-exponential} holds (Lemma \ref{lemma:exponential-estimate}). We will not need to increase $N$ any further.

  Recall from \eqref{eq:holo-avg} that $u_{n}-q_{n}$ satisfies the following equation:
  \begin{equation*}
    \bd_{s}(u_{n}-q_{n})+J_{0}\bd_{t}(u_{n}-q_{n})=\epsilon_{n} A_{n}\cdot(u_{n}-q_{n}),
  \end{equation*}
  where $A_{n}$ is given by \eqref{eq:An-derivate}. Note that the derivatives of $A_{n}$ on $[-r_{n},r_{n}]\times \R/\Z$ are uniformly bounded because of Lemma \ref{lemma:unif-est}.
  
  The elliptic estimates \eqref{eq:elliptic-estimate} can be applied to conclude that\footnote{Here we assume the norms have been defined so that
  \begin{equation*}
    \norm{A_{n}\cdot (u_{n}-q_{n})}_{W^{k,2}}\le \norm{A_{n}}_{C^{k}}\norm{u_{n}-q_{n}}_{W^{k,2}}.
  \end{equation*}
  For some choices of norm, this may hold only up to a combinatorial constant depending on $k$, but this will not affect the argument.
  }
  \begin{equation*}
    \norm{u_{n}-q_{n}}_{W^{k+1,2}(s+\Omega_{k+1})}\le C_{k}^{\mathrm{ell}}(\norm{A_{n}}_{C^{k}(s+\Omega_{k})}+1)\norm{u_{n}-q_{n}}_{W^{k,2}(s+\Omega_{k})}.
  \end{equation*}
  Thus we may recursively define $S_{k}$ by the formula
  \begin{equation*}
    S_{k+1}=C^{\mathrm{ell}}_{k}(B_{k}+1)S_{k},
  \end{equation*}
  where $B_{k}$ is any constant controlling the $C^{k}$ size of $A_{n}$. This establishes \eqref{eq:sobolev-exponential} and therefore completes the proof.  
\end{proof}

\section{Convergence to the limiting configuration}
\label{sec:limiting-configuration}

Our goal in this section is to prove that the sequence $u_{n}$ converges near the ends of the cylinder to holomorphic disks with removable singularities $x_{-}$ and $x_{+}$, and, away from the ends, $u_{n}$ converges to a flow line for $V=\lim_{n\to\infty}V_{n}$ connecting $x_{-}$ to $x_{+}$. See Figure \ref{fig:1} for an illustration of the idea.

As we will see below, in order to capture the limiting flow line, we will need to rescale the map $u_{n}$, i.e.,\ we instead consider the limiting behavior of
\begin{equation*}
  v_{n}(s,t)=u_{n}(\epsilon_{n}^{-1}s,\epsilon_{n}^{-1}t)
\end{equation*}
defined on the rescaled cylinder $[-\epsilon_{n}r_{n},\epsilon_{n}r_{n}]\times \R/\epsilon_{n}\Z$.

Since the domain of $v_{n}$ becomes highly degenerate from a metric perspective, we cannot apply most a priori estimates to $v_{n}$ (e.g.,\ one complication is that disks with a fixed radii $r$ become multiply covered with arbitrarily large covering degree in the domain of $v_{n}$). In order to show that $v_{n}$ converges, we will apply the Arzel\`a-Ascoli theorem.\footnote{In our proof, we will actually apply the Arzel\`a-Ascoli theorem to the center of mass of $v_{n}$. In our case, where $J=J_{0}$, there is a cancellation when deriving the equation for the center of mass (namely $\int_{\R/\Z} J_{0}\bd_{t}u\,\d t=0$) which simplifies the argument.} Unfortunately, the first derivative of $v_{n}$ grows as $\epsilon_{n}^{-1}$ times the first derivative of $u_{n}$. This makes it difficult to bound the derivatives of $v_{n}$ (which is a pre-requisite for applying Arzel\`a-Ascoli).

However, the exponential estimates derived in Section \ref{sec:apriori} \emph{can} be used to bound the derivative of $v_{n}$ on any subcylinder sufficiently far from the boundary circles; we need only travel far enough that $e^{-c(r-s)}+e^{-c(r+s)}\le \epsilon_{n}$.

We begin by analyzing the behavior of $u_{n}$ near the ends of the cylinder. We have the following result:

\begin{lemma}\label{lemma:endplate-convergence}
  Suppose that $u_{n}$ satisfies Definition \ref{definition:set-up}, and $r_{n}\to+\infty$. Consider the translations\footnote{The maximal domain of $u^{-}_{n}$ is $[-1,2r_{n}+1]$ and the maximal domain of $u^{+}_{n}$ is $[-2r_{n}-1,+1]$.}
  \begin{equation*}
    u^{-}_{n}(s,t)=u_{n}(s-r_{n},t)\text{ and }u^{+}_{n}(s,t)=u_{n}(s+r_{n},t).
  \end{equation*}
  After passing to a subsequence,  $u^{-}_{n}$ converges on compact subsets of $[0,\infty)\times \R/\Z$ to a limiting holomorphic\footnote{By ``holomorphic,'' we mean that $\bd_{s}u+J_{0}\bd_{t}u=0$.} map $u^{-}_{\infty}$, and (similarly) $u^{+}_{n}$ converges on compact subsets of $(-\infty,0]\times \R/\Z$ to a limiting holomorphic map $u^{+}_{\infty}$.

  Moreover, $u^{-}_{\infty}(s,t)$ is asymptotic as $s\to\infty$ to a point $x_{-}\in \cl{B}(1)$ and $u^{+}_{\infty}(s,t)$ is asymptotic as $s\to\infty$ to a point $x_{+}\in \cl{B}(1)$ (both limits are uniform in $t$). 
\end{lemma}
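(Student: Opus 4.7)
The plan is to extract a convergent subsequence via Arzel\`a-Ascoli applied to the translated maps, identify the limit as a holomorphic map, and then use the exponential decay estimate of Lemma \ref{lemma:apriori-exp-fin} together with the equation \eqref{eq:perturb-flow} for the center of mass to pin down the asymptotic limits $x_{\pm}$. I focus on $u_n^-$; the argument for $u_n^+$ is symmetric.

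For any compact $K\subset [0,\infty)\times \R/\Z$, the condition $r_n\to\infty$ ensures that eventually $K\subset [1,2r_n-1]\times \R/\Z$, so the uniform $C^k$ bounds of Lemma \ref{lemma:unif-est} apply to $u_n^-$ on $K$. Exhausting $[0,\infty)\times \R/\Z$ by such compacts and applying a standard diagonal extraction yields a subsequence (still denoted $u_n^-$) converging in $C^{\infty}_{\text{loc}}$ to a limit $u_\infty^-$. Translating \eqref{eq:1}, the map $u_n^-$ satisfies $\bd_s u_n^-+J_0\bd_t u_n^- =\epsilon_n V_n(u_n^-)$, and passing to the limit the right-hand side tends to $0$ uniformly on compacts (since $\epsilon_n\to 0$ and the $V_n$ are uniformly bounded), so $\bd_s u_\infty^-+J_0\bd_t u_\infty^-=0$.

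For the asymptotic behavior, translating Lemma \ref{lemma:apriori-exp-fin} by $s\mapsto s-r_n$ gives
\[
\abs{\nabla^k(u_n^--q_n^-)(s,t)}\le M_k\bigl(e^{-cs}+e^{-c(2r_n-s)}\bigr)
\]
for $s\in[1,2r_n-1]$ and $n$ large, where $q_n^-(s)=\int_{\R/\Z}u_n^-(s,t)\,\d t$. Letting $n\to\infty$ along the subsequence kills the second term, yielding $\abs{\nabla^k(u_\infty^--q_\infty^-)(s,t)}\le M_k e^{-cs}$, where $q_\infty^-(s)=\int_{\R/\Z}u_\infty^-(s,t)\,\d t$. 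To see that $q_\infty^-$ is constant, observe that \eqref{eq:1} together with $\int_{\R/\Z}J_0\bd_t u_n^-\,\d t=0$ gives $(q_n^-)'(s)=\epsilon_n\int_{\R/\Z}V_n(u_n^-(s,t))\,\d t$, which is $O(\epsilon_n)$ uniformly in $s$. Passing to the $C^{\infty}_{\text{loc}}$-limit, $(q_\infty^-)'\equiv 0$, so $q_\infty^-$ is a constant $x_-\in\cl{B}(1)$, and therefore $u_\infty^-(s,t)\to x_-$ uniformly in $t$ as $s\to\infty$. The symmetric argument applied to $u_n^+$ produces $u_\infty^+$ and $x_+$.

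I do not expect any serious analytic obstacle here, as the substantive work has already been done in Section \ref{sec:apriori}; the argument is essentially an assembly of those estimates with Arzel\`a-Ascoli and a passage to the limit. The only mild subtlety is confirming that $C^{\infty}_{\text{loc}}$-convergence of $u_n^-$ carries over to convergence of $q_n^-$ together with its $s$-derivative, but this is immediate because integration over $\R/\Z$ commutes with $\bd_s$ and with locally uniform limits.
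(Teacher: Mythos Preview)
Your argument is correct, but for the asymptotic behavior you take a genuinely different route from the paper. The paper observes that $u_\infty^-$ is a \emph{bounded holomorphic} map on $[0,\infty)\times\R/\Z$, which is biholomorphic to a punctured disk, and then invokes the classical removable singularity theorem to conclude that the puncture fills in to a point $x_-$. You instead carry the exponential estimate of Lemma~\ref{lemma:apriori-exp-fin} through the limit to get $\abs{u_\infty^- - q_\infty^-}\le M_0 e^{-cs}$, and separately argue that $q_\infty^-$ is constant because $(q_n^-)' = O(\epsilon_n)$. Both are valid; the paper's approach is more elementary and self-contained (it needs nothing from Section~\ref{sec:apriori} beyond the uniform $C^k$ bounds), while yours makes heavier use of the exponential machinery but is closer in spirit to how the rest of Section~\ref{sec:limiting-configuration} proceeds. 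One small slip: you write that any compact $K\subset [0,\infty)\times\R/\Z$ eventually lies in $[1,2r_n-1]\times\R/\Z$, which fails when $K$ meets $\{s=0\}$; but Lemma~\ref{lemma:unif-est} already gives bounds on $[-r_n,r_n]$, hence on $[0,2r_n]$ after translation, so the Arzel\`a--Ascoli step goes through with the corrected interval.
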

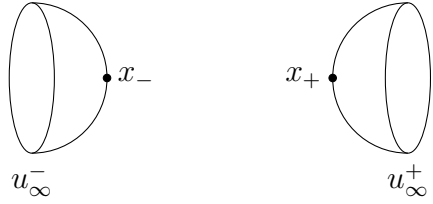
\begin{figure}[h]
  \centering
  \begin{tikzpicture}
  \draw (0,0) circle (0.3 and 1) (5,0) circle (0.3 and 1);
  \draw (0,1) arc (90:-90:1) node[below] {$u^{-}_{\infty}$}(5,1) arc (90:270:1)node[below]{$u^{+}_{\infty}$};
  \path (1,0)node[right]{$x_{-}$}--(4,0)node[left]{$x_{+}$};
  \path[every node/.style={draw,circle,inner sep=1pt,fill}] (1,0)node{}--(4,0)node{};
\end{tikzpicture}
  \caption{The limits $u_\infty^{\pm}$ and the removable singularities $x_\pm$.}
  \label{fig:3}
\end{figure}
\begin{proof}
  It is a direct consequence of Lemma \ref{lemma:unif-est} that the $k$th derivative of $u^{-}_{n}$ is bounded on compact subsets of $[0,\infty)$ as $n\to\infty$. Since $k$ is arbitrary, the Arzel\`a-Ascoli theorem implies a subsequence of $u^{-}_{n}$ converges on compact sets in the $C^{\infty}$ topology to a limiting map $u^{-}_{\infty}:[0,\infty)\times \R/\Z\to \cl{B}(1)$.

  Since $u_{n}$ satisfies $\bd_{s}u_{n}+J_{0}\bd_{t}u_{n}=\epsilon_{n}V_{n}$, and $V_{n}\epsilon_{n}\to 0$, we conclude that $u_{\infty}^{-}$ is holomorphic. The infinite cylinder $[0,\infty)\times \R/\Z$ is biholomorphically equivalent to a punctured disk. A standard result in complex analysis says that any bounded holomorphic function defined on a punctured disk has a removable singularity at the puncture.\footnote{In short, the boundedness assumption implies that the Cauchy Integral Formula can be applied to the holomorphic function, without any correction (``residue'') necessary.} Thus $\lim_{s\to \infty}u_{\infty}^{-}(s,t)=x_{-}$ exists (uniformly in $t$). The same argument works for $u^{+}_{n}$. This completes the proof.
\end{proof}


\begin{lemma}\label{lemma:cutting}
  Assume the setup and conclusion of Lemma \ref{lemma:endplate-convergence}. There exists a sequence $\rho_{n}\to +\infty$ so that the restrictions
  \begin{equation*}
    u_{n}^{-}:[0,\rho_{n}]\times \R/\Z\to \cl{B}(1)\text{ and }u_{n}^{+}:[-\rho_{n},0]\times \R/\Z\to \cl{B}(1)
  \end{equation*}
  converge uniformly to the limits $u_{\infty}^{-}$ and $u_{\infty}^{+}$, in the sense that
  \begin{equation*}
    \lim_{n\to\infty}\sup_{(s,t)\in [0,\rho_{n}]\times \R/\Z} \abs{u_{n}^{-}(s,t)-u_{\infty}^{-}(s,t)}+\sup_{(s,t)\in [-\rho_{n},0]\times \R/\Z}\abs{u_{n}^{+}(s,t)-u_{\infty}^{+}(s,t)}=0.
  \end{equation*}
  Moreover, this choice can be arranged so $\epsilon_{n}\rho_{n}\to 0$ and $\rho_{n}<r_{n}$. See Figure \ref{fig:cutting-domain}.
\end{lemma}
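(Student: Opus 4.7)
My approach is a standard diagonal extraction from the $C^{0}_{\mathrm{loc}}$ convergence provided by Lemma \ref{lemma:endplate-convergence}. For each positive integer $k$ I would choose $N_k\in\N$, with $(N_k)$ strictly increasing in $k$, such that whenever $n\ge N_k$,
\[
\sup_{(s,t)\in[0,k]\times\R/\Z}\abs{u_n^-(s,t)-u_\infty^-(s,t)}+\sup_{(s,t)\in[-k,0]\times\R/\Z}\abs{u_n^+(s,t)-u_\infty^+(s,t)}<1/k,
\]
which is possible because $u_n^{\pm}\to u_\infty^{\pm}$ uniformly on each of the compact strips $[0,k]\times\R/\Z$ and $[-k,0]\times\R/\Z$. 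Then I set $k(n)=\max\set{k\ge 0 : N_k\le n}$, and observe $k(n)\to\infty$, since for every fixed $K$ we have $n\ge N_K$ for all sufficiently large $n$.

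The width is then defined to be
\[
\rho_n=\min\bigl(k(n),\,\lfloor\epsilon_n^{-1/2}\rfloor,\,r_n-1\bigr).
\]
Each of the three quantities in the minimum tends to $+\infty$: the first by the previous paragraph; the second because $\epsilon_n\to 0$; and the third because the standing hypotheses of Theorem \ref{theorem:1} force $r_n\to\infty$ (in the $\ell=0$ branch this is assumed directly, and in the $\ell>0$ branch $r_n=(\epsilon_n r_n)/\epsilon_n\to\infty$). Consequently $\rho_n\to\infty$, while by construction $\rho_n<r_n$ and $\epsilon_n\rho_n\le\sqrt{\epsilon_n}\to 0$. Finally, the inclusion $[0,\rho_n]\subset[0,k(n)]$ (and the symmetric inclusion on the $u^+$ side), combined with the defining property of $N_{k(n)}$, bounds the total uniform difference by $1/k(n)\to 0$, which is exactly the claimed uniform convergence.

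There is no substantive obstacle here: the lemma is really a bookkeeping exercise, and the only point that requires any care is throttling $\rho_n$ to grow more slowly than both $\epsilon_n^{-1}$ and $r_n$ so that the two extra conditions $\epsilon_n\rho_n\to 0$ and $\rho_n<r_n$ are not spoiled by the desire to take $\rho_n$ as large as the convergence will allow.
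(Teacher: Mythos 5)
Your proposal is correct and takes essentially the same approach as the paper: both define $\rho_n$ as a slowly growing sequence throttled by three constraints (a $\sqrt{\epsilon_n}^{-1}$-type cap, a $r_n$ cap, and a cap forcing $d_n(\rho_n)\to 0$). The only cosmetic difference is that you realize the third constraint via a diagonal choice of $N_k$, whereas the paper directly takes $\rho_n$ to be the largest integer $k$ satisfying $d_n(k)\le k^{-1}$, $\epsilon_n k\le \sqrt{\epsilon_n}$, and $k<r_n$.
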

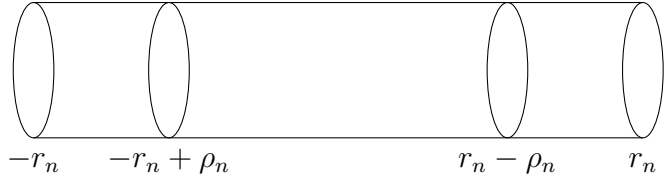
\begin{figure}[H]
  \centering
  \begin{tikzpicture}[scale=.9]
    \draw (0,0) circle (0.3 and 1) (5,0) circle (0.3 and 1) (7,0) circle (0.3 and 1) (-2,0) circle (0.3 and 1);
    \draw (-2,-1)node[below]{$-r_{n}\vphantom{-\rho_{n}}$}--(7,-1)node[below]{$r_{n}\vphantom{-\rho_{n}}$} (-2,1)--(7,1);
    \path (0,-1)node[below]{$-r_{n}+\rho_{n}$}--(5,-1)node[below]{$r_{n}-\rho_{n}$};
  \end{tikzpicture}
  \caption{Cutting the domain into regions as in Lemma \ref{lemma:cutting}. The region in between $-r_n$ and $-r_n+\rho_n$ converges uniformly to $u^-_\infty$ (after translating to the right by $r_n$). The region in between $r_n-\rho_n$ and $r_n$ converges uniformly to $u^+_\infty$ (after translating to the left by $r_n$). In particular, the circles at $\pm (r_n-\rho_n)$ converge uniformly to the removable singularities $x_\pm$.}
  \label{fig:cutting-domain}
\end{figure}

\begin{proof}
  Our definition of $\rho_{n}$ will be such that $\rho_{n}\in \N$. To simplify the notation introduce the quantity:
  \begin{equation*}
    d_{n}(k)=\sup_{(s,t)\in [0,k]\times \R/\Z} \abs{u_{n}^{-}(s,t)-u_{\infty}^{-}(s,t)}+\sup_{(s,t)\in [-k,0]\times \R/\Z}\abs{u_{n}^{+}(s,t)-u_{\infty}^{+}(s,t)}.
  \end{equation*}  
  Consider the following three ($n$-dependent) conditions we can place on $k\in \N$:
  \begin{enumerate}[label=(\arabic*)]
  \item $\epsilon_{n}k\le \epsilon_{n}^{1/2}$,
  \item $d_{n}(k)\le k^{-1}$,
  \item $k<r_{n}$,
  \end{enumerate}
  Define $\rho_{n}$ to be the maximal integer $k$ which satisfies all three properties. We will now show that $\rho_{n}$ has the desired properties. Observe that if $k$ is a fixed number, then both properties will hold for $k$ as $n\to\infty$, since both maps $u_{n}^{\pm}$ converge on compact sets ($[0,k]\times \R/\Z$ are $[-k,0]\times\R/\Z$ are compact sets, and $k^{-1}$ is a fixed number). This shows that $\rho_{n}$ is eventually greater than every fixed number, and hence $\rho_{n}\to\infty$.

  Clearly $\epsilon_{n}\rho_{n}$ tends to zero, since property (1) holds.

  Moreover, since $\rho_{n}\to\infty$, property (2) implies that $d_{n}(\rho_{n})\to 0$. This completes the proof.
\end{proof}

We will now analyze the region $[-r_{n}+\rho_{n},r_{n}-\rho_{n}]\times \R/\Z$. This is where we will see the flow line appear.

\begin{lemma}\label{lemma:final-conv}
  Let $u_{n}$ be a sequence satisfying Definition \ref{definition:set-up} and let $q_{n}$ be the center of mass as in Definition \ref{definition:center-of-mass}. Suppose that $\lim_{n\to\infty}\epsilon_{n}r_{n}=\ell$, and if $\ell=0$, require that $r_{n}\to+\infty$. Fix a choice of subsequence and numbers $\rho_{n}$ so that Lemma \ref{lemma:endplate-convergence} and Lemma \ref{lemma:cutting} hold. After passing to another subsequence, if necessary, the rescaled maps
  \begin{equation*}
    v_{n}(s,t)=u_{n}(\epsilon_{n}^{-1}s,\epsilon_{n}^{-1}t)
  \end{equation*}
  defined on the rescaled cylinder $[\epsilon_{n}(-r_{n}+\rho_{n}),\epsilon_{n}(r_{n}-\rho_{n})]\times \R/\epsilon_{n}\Z$ converge uniformly to a flow line $v_{\infty}$ for $V=\lim_{n\to\infty}V_{n}$ of length $2\ell$. The flow line starts at $x_{-}$ and ends at $x_{+}$.
\end{lemma}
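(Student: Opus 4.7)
The plan is to apply the Arzel\`a–Ascoli theorem to the rescaled center of mass $\tilde q_n(s) := q_n(\epsilon_n^{-1}s)$ rather than to $v_n$ directly, since the derivatives of $v_n$ blow up like $\epsilon_n^{-1}$ under the rescaling, while the rescaled equation for $\tilde q_n$ has a clean bounded limit. By \eqref{eq:perturb-flow}, factors of $\epsilon_n$ and $\epsilon_n^{-1}$ cancel (this is the cancellation $\int J_0\bd_t u\,\d t = 0$ highlighted in the footnote on the previous page), yielding
\begin{equation*}
  \tilde q_n'(s) = V_n(\tilde q_n(s)) + E_n(s), \qquad E_n(s) := \int_{\R/\Z}\bigl[V_n(u_n(\epsilon_n^{-1}s,t)) - V_n(q_n(\epsilon_n^{-1}s))\bigr]\,\d t.
\end{equation*}
On the central sub-interval $[-\epsilon_n(r_n-\rho_n), \epsilon_n(r_n-\rho_n)]$ we have $\sigma := \epsilon_n^{-1}s \in [-(r_n-\rho_n), r_n-\rho_n]$ and hence $r_n \pm \sigma \ge \rho_n$, so the $k=0$ case of Lemma \ref{lemma:apriori-exp-fin} together with the uniform $C^1$ bound on $V_n$ (from $V_n\to V$ in $C^\infty$) gives $\abs{E_n(s)} \le 2CM_0 e^{-c\rho_n} \to 0$ uniformly.

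Since $\tilde q_n$ takes values in $\cl{B}(1)$ and has $C^1$ norm bounded by $\sup\abs{V_n} + \sup\abs{E_n}$, Arzel\`a–Ascoli yields a subsequence converging uniformly on compact sub-intervals to a limit $v_\infty$ on $[-\ell,\ell]$; this limiting interval is well-defined since $\epsilon_n(r_n-\rho_n) \to \ell$. Passing to the limit in the ODE, using $V_n\to V$ in $C^\infty$ and $E_n\to 0$ uniformly, shows $v_\infty' = V(v_\infty)$, so $v_\infty$ is a flow line of $V$ of length $2\ell$. The endpoints are identified via
\begin{equation*}
  \tilde q_n\bigl(-\epsilon_n(r_n-\rho_n)\bigr) = q_n(-r_n+\rho_n) = \int_{\R/\Z} u_n^{-}(\rho_n, t)\,\d t,
\end{equation*}
which by Lemma \ref{lemma:cutting} is within $o(1)$ of $\int u_\infty^{-}(\rho_n, t)\,\d t$, which in turn converges to $x_-$ by Lemma \ref{lemma:endplate-convergence} combined with $\rho_n\to\infty$. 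The same argument at the right endpoint gives $v_\infty(\ell) = x_+$.

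Finally, uniform convergence of $v_n$ itself to $v_\infty$ on the rescaled central cylinder is immediate from the $k=0$ estimate of Lemma \ref{lemma:apriori-exp-fin}:
\begin{equation*}
  \abs{v_n(s,t) - \tilde q_n(s)} = \abs{u_n(\epsilon_n^{-1}s, \epsilon_n^{-1}t) - q_n(\epsilon_n^{-1}s)} \le 2M_0 e^{-c\rho_n} \to 0,
\end{equation*}
so $v_n$ inherits the limit of $\tilde q_n$. The degenerate case $\ell = 0$ is handled by the same argument: the rescaled domain shrinks to a point, the uniform derivative bound on $\tilde q_n$ together with $\epsilon_n(r_n-\rho_n) \to 0$ forces $x_+ = x_-$ in the limit, and $v_\infty$ is constant at this common point. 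The main obstacle --- that $v_n$'s $s$-derivative blows up and its $t$-domain degenerates under the rescaling --- is bypassed entirely by passing to the $t$-independent center of mass $\tilde q_n$, whose derivative is bounded uniformly; the exponential estimate of Section \ref{sec:apriori} is then powerful enough to close the gap $v_n - \tilde q_n$ directly, without any new elliptic or transversality input.
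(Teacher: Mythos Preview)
Your proof is correct and follows essentially the same approach as the paper: define the rescaled center of mass (the paper calls it $p_n$, you call it $\tilde q_n$), show via \eqref{eq:perturb-flow} and the exponential estimate that it solves a perturbed flow equation with error tending to zero, apply Arzel\`a--Ascoli to extract a limiting flow line, identify the endpoints using Lemmas~\ref{lemma:endplate-convergence} and~\ref{lemma:cutting}, and transfer the convergence to $v_n$ via the $k=0$ case of Lemma~\ref{lemma:apriori-exp-fin}. The only cosmetic differences are the order of the steps and your explicit treatment of the $\ell=0$ case, which the paper leaves implicit.
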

\begin{proof}
  We begin with the a priori estimate for the difference $u_{n}-q_{n}$; recall that we have proved in Lemma \ref{lemma:apriori-exp-fin} that for $n$ sufficiently large and all $k$ we have
  \begin{equation}\label{eq:apriori-final-k}
    \abs{\nabla^{k}(u_{n}-q_{n})(s,t)}\le M_{k}(e^{-c(r_{n}+s)}+e^{-c(r_{n}-s)}),
  \end{equation}
  for $s\in[-r_{n}+1,r_{n}-1]$ and all $t$. In particular, since $\rho_{n}\to \infty$, we conclude 
  \begin{equation*}
    \lim_{n\to\infty} \sup_{(s,t)\in [-r_{n}+\rho_{n},r_{n}-\rho_{n}]\times\R/\Z}\abs{u_{n}(s,t)-q_{n}(s)}=0.
  \end{equation*}
  As this is a $C^{0}$ estimate, it remains true after rescaling, i.e.,\ with $p_{n}(s)=q_{n}(\epsilon_{n}^{-1}s)$ then
  \begin{equation*}
    \lim_{n\to\infty} \sup_{(s,t)\in [-\epsilon_{n}^{-1}(r_{n}-\rho_{n}),\epsilon_{n}(r_{n}-\rho_{n})]\times\R/\epsilon_{n}\Z}\abs{v_{n}(s,t)-p_{n}(s)}=0.
  \end{equation*}  
  Thus to prove the lemma it suffices to show that $p_{n}$ converges uniformly on its domain  $[-\epsilon_{n}^{-1}(r_{n}-\rho_{n}),\epsilon_{n}(r_{n}-\rho_{n})]$ to a flow line for $V$ of length $2\ell$ joining $x_{-}$ to~$x_{+}$.

  Recall that $q_{n}(s)$ satisfies the differential equation \eqref{eq:perturb-flow}, and hence $p_{n}(s)$ satisfies
  \begin{equation*}
    p_{n}^{\prime}(s)-V_{n}(p_{n}(s))=\int_{\R/\Z}V_{n}(u_{n}(\epsilon_{n}^{-1}s,t))-V_{n}(q_{n}(\epsilon_{n}^{-1}s))\,\d t.
  \end{equation*}
  A simple computation \`a la fundamental theorem of calculus yields
  \begin{equation*}
    V_{n}(u_{n}(\epsilon_{n}^{-1}s,t))-V_{n}(q_{n}(\epsilon_{n}^{-1}s))=B_{n}\cdot (u_{n}(\epsilon_{n}^{-1}s,t)-q_{n}(\epsilon_{n}^{-1}s)),
  \end{equation*}
  where $B_{n}$ is uniformly bounded in $n$. Applying \eqref{eq:apriori-final-k} with $k=0$ yields
  \begin{equation*}
    \abs{p_{n}^{\prime}(s)-V_{n}(p_{n}(s))}\le Ce^{-c\rho_{n}}\to 0
  \end{equation*}
  for some constant $C$ for all $s\in [-\epsilon_{n}(r_{n}-\rho_{n}),\epsilon_{n}(r_{n}-\rho_{n})]$. Notice that $p_{n}^{\prime}(s)$ is bounded, and $\lim_{n\to\infty}\epsilon_{n}(r_{n}-\rho_{n})=\ell$. Then a straightforward application of Arz\'ela-Ascoli shows that, after passing to a subsequence, $p_{n}$ converges uniformly to a flow line $v_{\infty}:[-\ell,\ell]\to \cl{B}(1)$ for $V$. To be a bit more precise about the convergence, we have
  \begin{equation*}
    \lim_{n\to\infty}\sup_{s\in [-\epsilon_{n}(r_{n}-\rho_{n}),\epsilon_{n}(r_{n}-\rho_{n})]\cap [-\ell,\ell]}\abs{p_{n}(s)-v_{\infty}(s)}=0.
  \end{equation*}
  In particular, since $p_{n}$ has a bounded derivative, and $\epsilon_{n}(r_{n}-\rho_{n})\to \ell$, we conclude  $p_{n}(\pm\epsilon_{n}(r_{n}-\rho_{n}))$ converges to $v_{\infty}(\pm \ell)$. We have
  \begin{equation*}
    p_{n}(\pm\epsilon(r_{n}-\rho_{n}))=q_{n}(\pm(r_{n}-\rho_{n}))=\int u_{n}(\pm(r_{n}-\rho_{n}),t)\to x_{\pm},
  \end{equation*}
  and hence $v_{\infty}(\pm\ell)=x_{\pm}$, as desired. This completes the proof.  
\end{proof}

\bibliography{citations}
\bibliographystyle{amsalpha}

\end{document}